\newtheorem{thm}{Theorem}[section]
\newtheorem{lem}[thm]{Lemma}
\newtheorem{prop}[thm]{Proposition}
\newtheorem{remark}[thm]{Remark}
\numberwithin{equation}{section}
\def\enne{\mathbb{N}}
\def\zeta{\mathbb{Z}}
\def\erre{\mathbb{R}}
\def\R{\mathbb{R}}
\def\P{\mathbb{P}}
\def\H{\mathcal H}
\def\E{\mathop{{}\mathbb{E}}}
\def\cL{\mathscr{L}}
\def\cF{\mathscr{F}}
\def\cP{\mathscr{P}}
\def\OO{\mathcal{O}}
\renewcommand{\d}{{\mathrm d}}
\def\beq{\begin{equation}}
\def\eeq{\end{equation}}
\def\to{\rightarrow}
\def\embed{\hookrightarrow}
\def\norm #1{\left\|#1\right\|}
\def\sp #1#2{\left<#1,#2\right>}
\newcommand\ip\sp
\begin{document}
\title[Separation for the stochastic Allen-Cahn equation]
{A note on regularity and separation\\ for the
stochastic Allen-Cahn equation\\
with logarithmic potential}

\author{Carlo Orrieri}
\address[Carlo Orrieri]{Department of Mathematics, 
Universit\`a di Pavia, Via Ferrata 1, 27100 Pavia, Italy}
\email{carlo.orrieri@unipv.it}
\urladdr{http://www-dimat.unipv.it/orrieri}

\author{Luca Scarpa}
\address[Luca Scarpa]{Department of Mathematics, Politecnico di Milano, 
Via E.~Bonardi 9, 20133 Milano, Italy.}
\email{luca.scarpa@polimi.it}
\urladdr{https://sites.google.com/view/lucascarpa}

\subjclass[2010]{35K10, 35K55, 35K67, 60H15}

\keywords{Stochastic Allen-Cahn equation, random separation property, logarithmic potential, exponential estimates.}

\maketitle


\begin{abstract}
We prove refined space-time regularity for the classical 
stochastic Allen-Cahn equation with logarithmic potential.
This allows to establish a random separation property, 
i.e.~that the trajectories of the solution are strictly 
separated from the potential barriers.
The present contribution extends
the results obtained in \cite{BOS},
where separation was proved only for 
$p$-Laplace operators with $p$ greater than the space-dimension.
\end{abstract}

\setcounter{tocdepth}{1}
\tableofcontents


\section{Introduction}
\setcounter{equation}{0}
\label{sec:intro}

The focus of this paper is a qualitative study of the 
stochastic Allen-Cahn equation with logarithmic potential 
in the direction of regularity of solutions and 
consequences on the separation property.

The Allen-Cahn equation is a well-established model
in the context of phase-separation phenomena in several fields 
such as physics and biology. 
From the modelling point of view, it describes the evolution of 
binary mixtures, or more generally of materials exhibiting 
two different phases occupying a given
domain $\OO \subset \R^d$, with $d=1,2,3$. 
The difference of volume fractions 
of the two phases is represented by 
a phase variable $u$, taking values in $[-1,1]$. The 
pure phases are identified with the
regions $\{u=-1\}$ and $\{u=1\}$, 
and it is supposed that they are separated by a
narrow diffuse interface $\{-1<u<1\}$.

The Allen-Cahn equation reads as
\beq
  \label{eq:ac}
  \partial_t u - \Delta u + F'(u) = f \quad\text{in } (0,T)\times\OO\,,
\eeq
where $T>0$ is a fixed final time and $f$ is a given distributed source.
The equation is classically coupled with 
homogenous Dirichlet or Neumann boundary conditions and a given initial datum.
The thermodynamically relevant choice of the potential $F$ is
the logarithmic one, also know as Flory-Huggins potential, defined as
\beq
  \label{F_log}
  F_{\log}(r):=\frac{\theta}{2}((1+r)\log(1+r)+(1-r)\log(1-r))-\frac{\theta_0}{2}r^2 \,,
  \quad r\in[-1,1]\,,
\eeq
where $0 < \theta < \theta_0$ are given constants. 
It is well-known that the Allen-Cahn equation is the 
$L^2(\OO)$-gradient flow of the free energy functional 
\beq
  \label{free_en}
  \mathcal E(u):=\frac12\int_\OO|\nabla u|^2 + \int_\OO F(u)\,.
\eeq
The dynamics of the Allen-Cahn equation result then 
from the competition between the minimisation of the two
terms of the free energy functional, namely 
the concentration towards the two minima of $F$ and 
the avoidance of high oscillations.

One of the main features of the logarithmic potential \eqref{F_log}
is that its two global minima are inside the physical range $(-1,1)$
and its derivative is singular at $\pm1$. Consequently, 
one of the crucial issues in the study of the model
concern the separation of the solutions from the potential barriers $\pm1$.
More precisely, if the initial concentration is an actual mixture 
of the two phases, namely if
\[
\exists\,\delta_0\in(0,1):\quad 
|u_0(x)|\leq 1-\delta_0 \quad\text{for a.e.~}x\in\OO\,,
\]
it is possible to prove that the entire trajectory $u$
is also separated from $\pm1$ in the sense that 
\[
  \exists\,\delta\in(0,1):\quad
  |u(t,x)|\leq 1-\delta \quad\text{for a.e.~}x\in\OO\,, \quad\forall\,t\in[0,T]\,.
\]
The investigation of the separation property 
is relevant both from the modelling perspective, 
since the convex contribution of the potential
can be used to measure the macroscopic mixing of the constituents,
and from the mathematical viewpoint.
Indeed, if the trajectory $u$ is separated, the action of $F'$
on $u$ behaves in a Lipschitz way, despite 
of the singularity at $\pm1$. {\em A posteriori}, this justifies
the typical polynomial approximation of the potential $F$
which is often employed in the literature.
For further detail on the separation property 
we refer to \cite[Sec.~6.2]{GGG_sep}.

The literature on deterministic diffuse-interface models 
is extremely developed. With no sake of completeness,
we refer to the contributions 
\cite{cal-colli, colli-sprek-optACDBC}, and the references therein, 
dealing with well-posedness and optimal control of the 
Allen-Cahn equation. These results are strongly based on the separation
property, which follows from the maximum principle in the deterministic case.
Concerning fourth-order models such as the Cahn-Hilliard equation,
the separation property is a much more delicate issue due 
to the lack of the maximum principle.
In the two-dimensional case this was investigated in the papers
\cite{GGG, GGG_sep}, while the three-dimensional case has
been recently established in the contributions 
\cite{G_sep, P_sep}. Further results for 
more general models can be found in
\cite{bcst2, GGM, GGW}.

Although the deterministic approach is 
satisfactory in capturing several effects occurring in phase-separation,
it fails in describing all the possible microscopic perturbations 
due to, e.g., configurational, vibrational, and magnetic oscillations. 
These can be included in the model by switching to 
a stochastic description involving a random source term.
Usually, for mathematical purposes this is achieved by 
introducing a cylindrical Wiener process in the equation 
for the phase-variable. 
The resulting stochastic Allen-Cahn equation reads as
\beq
  \label{eq:ac_s}
  \d u -\Delta u\,\d t + F'(u)\,\d t = \mathcal H(u)\,\d W \,, 
\eeq
where $W$ is a given cylindrical Wiener process and 
$\H$ is a suitable stochastically-integrable operator.
The well-posedness of the stochastic equation with singular potential,
as well as the separation property, is significantly 
more involved due to absence of a maximum principle.
The first well-posedness result
with logarithmic potential can be found in
\cite{bertacco}, where a suitable degeneracy 
of the multiplicative noise is exploited to 
compensate the blow-up of the potential.
Analogous techniques have been employed in
the context of stochastic Cahn-Hilliard equations 
and thin-films equations in 
\cite{dar-gess-gnann-grun, grun-metzger, scar-mobility}.

For what concerns random separation property of the 
stochastic Allen-Cahn model, 
the first contribution
is given in the work \cite{BOS}, where 
a class of $p$-Laplace Allen-Cahn equations is considered with $p>d$.
The presence of the $p$-Laplace operator allows to 
obtain suitable spatial H\"older regularity of the solution, 
which in turn ensures pointwise evaluation in space and time.
By combining this information with suitable estimates 
on the derivatives of the potential, 
an adaptation of an argument of \cite{sch-seg-stef}
produces the separation property 
by a contraction argument.
More precisely, one is able to show that 
if  
\[
\exists\,\delta_0\in(0,1):\quad 
|u_0(x)|\leq 1-\delta_0 \quad\text{for a.e.~}x\in\OO\,,
\]
then for almost every $\omega \in \Omega$
there exists $\delta(\omega)\in(0,1)$ such that 
\[
\sup_{(t,x)\in[0,T]\times\overline\OO}|u(\omega,t,x)|
 \leq 1-\delta(\omega)\,.
\] 
This shows that almost every trajectory 
is strictly separated from the barriers $\pm1$,
but the threshold of separation is $\omega$-dependent
and identifies naturally a random variable.
Further qualitative properties of the separation layer
are obtained in \cite{BOS}, such as
probability estimates, exponential decay, and convergence
towards the deterministic model as the noise vanishes.

The technical requirement $p>d$ in \cite{BOS}
unfortunately excludes the classical Allen-Cahn equation (with $p=2$)
in dimension 2 and 3. The main goal of the present 
contribution is to fill the gap and 
establish the random separation property 
for the case $p=2$ up to dimension 3.
The idea is to first recover some refined spatial H\"older regularity 
on the solution via suitable $H^2$-estimates:
these are obtained by a careful application of It\^o formula
for the derivatives of
the potential.

To summarise, in this work we focus on the problem
\begin{equation}\label{eq:intro}
\begin{aligned}
  \d u - \Delta u\,\d t + F'(u)\,\d t = \mathcal H(u)\,\d W \qquad&\text{in } (0,T)\times\OO\,,\\
  u=0 \qquad&\text{in } (0,T)\times\partial\OO\,,\\
  u(0)=u_0 \qquad&\text{in } \OO\,.
\end{aligned}
\end{equation}
where $F'$ is singular at the endpoints $\pm1$, so that the 
logarithmic potential case is included.

The paper is structured in the following way. 
Section~\ref{sec:main} presents the general setting and 
the statements of the main results, as well as a comment on
possible further developments,
Section~\ref{sec:reg} contains the proof of the refined regularity estimates, 
and Section~\ref{sec:proof2} deals with the proof of the separation result.


\section{Setting and main results}
\label{sec:main}

\subsection{Setting and notation}
Let $T>0$ be a fixed final time, let $(\Omega,\cF,\P)$ be
a probability space endowed with a filtration $(\cF_t)_{t\in[0,T]}$
which satisfies the usual conditions, 
and let $W$ be a 
cylindrical Wiener process on a separable Hilbert space $U$.
The progressive sigma algebra on $\Omega\times[0,T]$
is denoted by $\cP$.
Throughout the paper, $(e_k)_{k\in\enne}$
denotes a complete orthonormal system of $U$.
Furthermore, let $\OO\subset\erre^d$, $d\in\{1,2,3\}$, be a bounded domain 
with Lipschitz boundary $\partial\OO$, and 
define the functional spaces 
\[
  H:=L^2(\OO)\,, \qquad V:=H^1_0(\OO)\,, \qquad Z:= H^2(\OO)\cap H^1_0(\OO)\,,
\]
endowed with their classical norms $\norm{\cdot}_H$,
$\norm{\cdot}_{V}$, and $\norm{\cdot}_{Z}$, respectively. 
We identify $H$ with its dual $H^*$, so that
we have the dense, continuous, and compact inclusions
\[
  Z\embed V \embed H \embed V*\embed Z^*\,.
\] 
We recall that the Laplacian operator with homogeneous Dirichlet 
boundary conditions can be seen as an unbounded linear operator 
$-\Delta$ on $H$, with effective domain $Z$.
Eventually, for every $R>0$ we introduce the sublevel set
\[
  B_R:=\{\varphi\in H:\; |\varphi|\leq R \quad\text{a.e.~in } \OO\}\,.
\]

For every Hilbert spaces $E_1$ and $E_2$, 
we use the symbol 
$\cL^2(E_1,E_2)$ to denote the
space of Hilbert-Schmidt operators from $E_1$ to $E_2$.
For every $s,r\in[1,+\infty]$ and for every Banach space $E$
we use the classical symbols $L^s(\Omega; E)$ and $L^r(0,T; E)$
for the spaces of Bochner-integrable functions 
on $\Omega$ and $(0,T)$, respectively.
In the particular case where
$s,r\in[1,+\infty)$, we employ the notation 
$L^s_\cP(\Omega;L^r(0,T; E))$ to stress 
progressive measurability of the processes.
We recall also that
when $s\in(1,+\infty)$,
$r=+\infty$, and $E$ is separable and reflexive,
the space 
\[
  L^s_w(\Omega; L^\infty(0,T; E^*)):=
  \left\{v:\Omega\to L^\infty(0,T; E^*) \text{ weakly* meas.}\,:\,
  \E\norm{v}_{L^\infty(0,T; E^*)}^s<\infty
  \right\}\,,
\]
satisfies by
\cite[Thm.~8.20.3]{edwards} the identification
\[
L^s_w(\Omega; L^\infty(0,T; E^*))=
\left(L^{s/(s-1)}(\Omega; L^1(0,T; E))\right)^*\,.
\]
Finally, the symbol $C^0_w([0,T]; E)$ denotes the space of continuous functions from $[0, T]$ to the Banach space $E$ endowed with the weak topology.

\subsection{Assumptions and main results}
We recall that the logarithmic potential 
is defined as in \eqref{F_log} as
\[
  F_{\log}(r)=\frac{\theta}{2}((1+r)\log(1+r)+(1-r)\log(1-r))-\frac{\theta_0}{2}r^2 \,,
  \quad r\in[-1,1]\,,
\]
where $0 < \theta < \theta_0$ are given constants, and
that the derivatives 
of $F_{\log}$ are given by 
\[
  F'_{\log}(r)=\frac\theta2\log\frac{1+r}{1-r} - \theta_0r\,, \qquad
  F''_{\log}(r)=\frac\theta{1-r^2} - \theta_0\,, \qquad r\in(-1,1)\,.
\]

We need to introduce the following functions, for $s\geq1$:
\beq
  \label{Gs}
  G_s:(-1,1)\to\erre\,, \qquad G_s(x):=\frac1{(1-x^2)^s}\,, \quad x\in(-1,1)\,.
\eeq
These will be crucial also to quantitatively measure how $u$ accumulates 
towards $\pm1$.

The main assumptions that we require cover very large classes of potentials,
and in particular they apply to the logarithmic one.
We assume the following setting.

\begin{enumerate}[start=1,label={{(H\arabic*})}]

\item \label{H1}
The function $F:(-1,1)\to \erre$ satisfies
\begin{itemize}
  \item $F$ is of class $C^2$, nonnegative, and $F'(0)=0$;
  \item $\lim_{r\to(\mp1)^{\pm}}F'(r)={\mp}\infty$;
  \item there are $C_F>0$ and $s_F\geq1$
  such that 
  \[
  -C_F\leq F''(r)\leq C_F(1+G_{s_F}(r)) \quad\forall\,r\in(-1,1)\,.
  \]
\end{itemize}

\item \label{H2}
The sequence of functions $(h_k)_{k\in\enne}$ satisfies 
\begin{itemize}
  \item $h_k\in W_0^{1,\infty}(-1,1)$ for every $k\in\enne$;
  \item $F''h_k^2\in L^\infty(-1,1)$ for every $k\in\enne$;
  \item $C_{1,\mathcal H}^2:=\sum_{k=0}^\infty
  (\|h_k\|_{W^{1,\infty}(-1,1)}^2
  +\|F''h_k^2\|_{L^\infty(-1,1)})
  <+\infty$.
\end{itemize}
Let us notice that hypothesis \ref{H2} ensures that the 
operator
\[
  \H: B_1\to \cL^2(U,H)\,,\qquad
  \H(v)[e_k]:=h_k(v)\,, \quad v\in B_1\,,\quad k\in\enne\,,
\]
is well-defined and $C_{1,\H}$-Lipschitz-continuous 
with respect to the topology of $H$.
\end{enumerate}

\begin{remark}
  Note that the logarithmic potential satisfies \ref{H1}--\ref{H2} with 
  natural choices of the constant $C_F$ and with $s_F=1$.
\end{remark}

We first recall the well-posedness result for equation \eqref{eq:intro}:
this is proved in \cite[Thm.~2.1]{bertacco} in the case $p=2$
(see also the techniques 
in \cite{BOS} for non-random initial datum).
\begin{prop}\label{prop:WP}
  Assume \ref{H1}--\ref{H2}, let $p\geq2$, and let
  \[
  u_0\in L^p(\Omega,\cF_0; V)\,, \qquad
  F(u_0)\in L^{\frac p2}(\Omega,\cF_0; L^1(\OO))\,.
  \]
  Then, there exists a unique 
  \[
  u \in L^p_\cP(\Omega; C^0([0,T]; H))\cap 
  L^p_w(\Omega; L^\infty(0,T; V))\cap
  L^p_\cP(\Omega; L^2(0,T; Z))
  \]
  such that 
  \[
  F'(u) \in L^p_\cP(\Omega; L^2(0,T; H))\,,
  \]
  and 
  \begin{align*}
  u(t) -\int_0^t\Delta u(s)\,\d s + \int_0^tF'(u(s))\,\d s
  =u_0 + \int_0^t\mathcal H(u(s))\,\d W(s)
  \quad\forall\,t\in[0,T]\,,\quad\P\text{-a.s.}
  \end{align*}
  Moreover, the solution map $u_0\mapsto u$ is Lipschitz-continuous 
  from the space $L^p(\Omega,\cF_0; H)$ to the space
  $L^p_\cP(\Omega; C^0([0,T]; H)\cap L^2(0,T; V))$.
\end{prop}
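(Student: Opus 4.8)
Since this statement coincides with \cite[Thm.~2.1]{bertacco}, the plan would be to reconstruct it by regularising the singular nonlinearity, solving the regularised problem with the standard variational theory for monotone SPDEs, and then passing to the limit by a monotonicity argument; what makes the whole scheme work is precisely the degeneracy of $\H$ at the barriers encoded in \ref{H2}, which compensates the blow-up of $F'$. As a first step I would fix $\lambda\in(0,1)$ and replace $F$ by a function $F_\lambda\colon\erre\to\erre$ of class $C^2$, obtained by freezing $F$ on $[-1+\lambda,1-\lambda]$ and extending it so that its second derivative is globally bounded and still $\geq-C_F$; by construction $F_\lambda\nearrow F$ and $F_\lambda'\to F'$ pointwise on $(-1,1)$, and $\|F_\lambda''h_k^2\|_{L^\infty(\erre)}$ is bounded uniformly in $\lambda$ by $C\|F''h_k^2\|_{L^\infty(-1,1)}$. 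Extending each $h_k$ by zero outside $(-1,1)$ keeps $\H$ globally Lipschitz on $H$. For the regularised equation $\d u_\lambda-\Delta u_\lambda\,\d t+F_\lambda'(u_\lambda)\,\d t=\H(u_\lambda)\,\d W$ with $u_\lambda(0)=u_0$, the drift is coercive on $V$ and monotone up to the shift $C_F$ and $\H$ is Lipschitz, so the classical theory (Pardoux; Krylov--Rozovskii; Liu--R\"ockner) provides a unique variational solution $u_\lambda\in L^p_\cP(\Omega;C^0([0,T];H))\cap L^p_\cP(\Omega;L^2(0,T;V))$.

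The heart of the matter is a set of a priori estimates uniform in $\lambda$. Applying It\^o's formula to $t\mapsto\tfrac12\norm{u_\lambda(t)}_H^2$, then taking suprema and using the Burkholder--Davis--Gundy inequality and Gronwall's lemma, gives a uniform bound for $u_\lambda$ in $L^p_\cP(\Omega;C^0([0,T];H))\cap L^2_\cP(\Omega;L^2(0,T;V))$. Applying It\^o's formula to the regularised energy $t\mapsto\tfrac12\norm{\nabla u_\lambda(t)}_H^2+\int_\OO F_\lambda(u_\lambda(t))$, I would use the identity
\[
\norm{-\Delta u_\lambda+F_\lambda'(u_\lambda)}_H^2=\norm{\Delta u_\lambda}_H^2+\norm{F_\lambda'(u_\lambda)}_H^2+2\int_\OO F_\lambda''(u_\lambda)|\nabla u_\lambda|^2\geq\norm{\Delta u_\lambda}_H^2+\norm{F_\lambda'(u_\lambda)}_H^2-2C_F\norm{\nabla u_\lambda}_H^2
\]
for the dissipation term, and I would bound the two It\^o corrections $\tfrac12\sum_k\int_\OO|h_k'(u_\lambda)|^2|\nabla u_\lambda|^2$ and $\tfrac12\sum_k\int_\OO F_\lambda''(u_\lambda)h_k(u_\lambda)^2$ by $C(1+\norm{\nabla u_\lambda}_H^2)$ using \ref{H2} together with the uniform bound on $\|F_\lambda''h_k^2\|_{L^\infty(\erre)}$; BDG, Gronwall and standard elliptic regularity (which identifies $D(-\Delta)=Z$) then yield
\[
\E\,\norm{u_\lambda}^p_{L^\infty(0,T;V)}+\E\,\norm{u_\lambda}^p_{L^2(0,T;Z)}+\E\,\norm{F_\lambda'(u_\lambda)}^p_{L^2(0,T;H)}\leq C
\]
with $C$ independent of $\lambda$.

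To conclude I would pass to the limit $\lambda\to0$ along a subsequence, extracting $u_\lambda\to u$ weakly in $L^p_\cP(\Omega;L^2(0,T;Z))$, weakly-$*$ in $L^p_w(\Omega;L^\infty(0,T;V))$, and $F_\lambda'(u_\lambda)\wto\xi$ in $L^p_\cP(\Omega;L^2(0,T;H))$. Using compactness (Aubin--Lions combined with a stochastic-compactness argument) together with the lower semicontinuity of the relevant norms — or, exploiting the monotone structure, a direct limit passage in the affine and stochastic terms, the stochastic integral being handled by the It\^o isometry and weak convergence — one obtains that $u$ solves the equation with $\xi$ in place of $F'(u)$. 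A Minty-type monotonicity argument, based on controlling $\limsup_\lambda\E\int_0^T\sp{F_\lambda'(u_\lambda)+C_F u_\lambda}{u_\lambda}\,\d t$ via the energy identity and on the maximal monotonicity in $H$ of the graph of $F'+C_F\,\mathrm{id}$, then gives $\xi=F'(u)$ and simultaneously $u\in B_1$ almost everywhere, the maximal monotone extension of $F'$ having domain contained in $[-1,1]$. The membership of $u$ in the stated spaces follows from the uniform bounds by weak lower semicontinuity, and weak continuity in $H$ together with the energy identity upgrades $u$ to $C^0([0,T];H)$.

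Uniqueness and the Lipschitz estimate on the solution map are then obtained directly, without approximation: writing the equation for the difference $u_1-u_2$ of two solutions, applying It\^o's formula to $t\mapsto\norm{u_1(t)-u_2(t)}_H^2$, using that $F'+C_F\,\mathrm{id}$ is monotone and that $\H$ is $C_{1,\H}$-Lipschitz in $H$, and closing with Gronwall's lemma and the Burkholder--Davis--Gundy inequality after taking expectations, one reaches the claimed Lipschitz continuity from $L^p(\Omega,\cF_0;H)$ to $L^p_\cP(\Omega;C^0([0,T];H)\cap L^2(0,T;V))$, which in particular yields uniqueness. The two genuinely delicate points I expect are the $\lambda$-uniform $H^2$-estimate — keeping the It\^o correction involving $F_\lambda''$ under control, which is exactly where the vanishing of the $h_k$ at $\pm1$ from \ref{H2} enters — and the simultaneous identification of $\xi=F'(u)$ and $u\in B_1$; the remaining ingredients are by now standard.
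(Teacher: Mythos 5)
The paper does not prove this proposition itself: it is quoted from \cite[Thm.~2.1]{bertacco} (with the techniques of \cite{BOS} for the $p$-th moment version), and your reconstruction follows essentially the same scheme as that reference --- regularisation of $F'$, variational solvability of the approximate problems, $\lambda$-uniform estimates via It\^o's formula for $\tfrac12\norm{u_\lambda}_H^2$ and for the energy, with the It\^o correction controlled precisely through the bound on $F''h_k^2$ in \ref{H2}, and a monotonicity/Minty passage to the limit plus the standard It\^o--Gronwall argument for uniqueness and Lipschitz dependence. Your outline is correct; the only points glossed over are routine (justifying the energy-level It\^o formula, e.g.\ via a further Galerkin step, and checking that your explicit construction of $F_\lambda$ really keeps $\norm{F_\lambda''h_k^2}_{L^\infty}$ uniformly controlled), and they are handled in the cited proof.
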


Our first result focuses on space-time regularity for the solution, for which
we need to strengthen assumption \ref{H2}
by adding the following requirements.
\begin{enumerate}[start=3,label={{(H\arabic*})}]

\item \label{H3}
The sequence of functions $(h_k)_{k\in\enne}$ satisfies 
\begin{itemize}
  \item $s_0\in\enne$ is such that $s_0\geq d s_F-1$;
  \item $h_k\in W_0^{1+2s_0,\infty}(-1,1)$ for every $k\in\enne$;
  \item $C_{2,\mathcal H}^2:=\sum_{k=0}^\infty
  \|h_k\|_{W^{1+2s_0,\infty}(-1,1)}^2
  <+\infty$.
\end{itemize}
\end{enumerate}

\begin{thm}[Regularity]
  \label{thm:reg}
  Assume \ref{H1}--\ref{H3}, let $p>2$, and let
  \[
  u_0\in L^p(\Omega,\cF_0; Z)\,, \qquad
  \exp\left(\|G_{s_0}(u_0)\|_{L^1(\OO)}\right) \in 
  L^q(\Omega,\cF_0)\quad\forall\,q\geq1\,.
  \]
  Then, the unique solution $u$ given by Proposition~\ref{prop:WP}
  satisfies 
  \begin{align*}
  u\in L^2_\cP(\Omega; C^0([0,T]; V))\cap L^2_w(\Omega; L^\infty(0,T; Z))
  \cap L^2_\cP(\Omega; L^2(0,T; H^3(\OO)))\,.
  \end{align*}
\end{thm}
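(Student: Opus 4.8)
The plan is to run a Galerkin/regularization scheme and close suitable uniform estimates at the $H^2$-level and $H^3$-level, using the Laplacian $-\Delta$ (or its spectral projections) as test object, and to control the singular drift via an exponential integrability estimate propagated by It\^o's formula applied to $G_{s_0}(u)$. First I would work with the regularized potential $F_\lambda$ (Yosida-type approximation, $C^2$ with bounded second derivative) and a smooth noise, so that all formal manipulations below are justified; the final statement then follows by lower semicontinuity after passing to the limit, exactly as in Proposition~\ref{prop:WP}. The new content over Proposition~\ref{prop:WP} is the passage from $L^2(0,T;Z)$ to $L^\infty(0,T;Z)\cap L^2(0,T;H^3)$, and this is where the work lies.

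\smallskip

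Step 1 (exponential moment for the singular part). Apply It\^o's formula to the functional $v\mapsto \|G_{s_0}(v)\|_{L^1(\OO)}$, or more precisely to its exponential $\exp(\|G_{s_0}(u)\|_{L^1})$. Using that $G_{s_0}'(x)=2s_0 x/(1-x^2)^{s_0+1}$ has the same sign as $x$ and that $G_{s_0}'(r)F'(r)\geq 0$ up to a term controlled by $C_F(1+G_{s_F})$ (here $s_0\geq d s_F-1\geq s_F$ is used so that $G_{s_F}\lesssim 1+G_{s_0}$), the drift contribution from $F'$ is favourable modulo lower-order terms; the gradient term $-\int_\OO G_{s_0}''(u)|\nabla u|^2$ has the right sign since $G_{s_0}$ is convex near $\pm1$; and the It\^o correction from the noise is bounded by $C_{2,\H}$ because $h_k\in W_0^{1+2s_0,\infty}$ vanishes at $\pm1$ fast enough to absorb $G_{s_0}''$. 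Taking expectations and using a Gronwall-type argument for the exponential yields
\[
  \E\,\sup_{t\in[0,T]}\exp\!\big(q\|G_{s_0}(u(t))\|_{L^1(\OO)}\big)<\infty \qquad\forall\,q\geq 1,
\]
which, via the Sobolev embedding in dimension $d\leq 3$ and the relation $s_0\geq d s_F-1$, controls $\|F''(u)\|_{L^{d}(\OO)}$ (hence $\|F''(u)\nabla u\|$ in a suitable negative-order norm) with all moments.

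\smallskip

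Step 2 ($H^2$-estimate). Apply It\^o's formula to $t\mapsto\|\nabla u(t)\|_H^2$, or equivalently test the equation with $-\Delta u$. One gets
\[
  \tfrac12\d\|\nabla u\|_H^2 + \|\Delta u\|_H^2\,\d t
  = -\!\int_\OO F''(u)|\nabla u|^2\,\d t + \tfrac12\sum_k\|\nabla h_k(u)\|_H^2\,\d t
  + \big(\nabla u,\nabla\H(u)\big)\,\d W .
\]
The sign of $-\int F''(u)|\nabla u|^2$ is controlled from below by $-C_F\|\nabla u\|_H^2$, which is harmless; the noise correction is bounded by $C_{1,\H}^2(1+\|\nabla u\|_H^2)$. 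After Burkholder--Davis--Gundy on the stochastic integral and Gronwall, this gives $u\in L^2_w(\Omega;L^\infty(0,T;Z))\cap L^2_\cP(\Omega;C^0([0,T];V))$ once we know $\Delta u\in L^2$ in time, which is already in Proposition~\ref{prop:WP}; the point is the $L^\infty$-in-time bound on $\|\nabla u\|_H$, i.e.\ on the $Z$-norm via elliptic regularity for the Dirichlet Laplacian on the Lipschitz domain.

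\smallskip

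Step 3 ($H^3$-estimate). Test the equation with $\Delta^2 u$ (equivalently apply It\^o to $\|\Delta u\|_H^2$). The principal part produces $+\|\nabla\Delta u\|_H^2$, the desired $H^3$-norm; the drift term is $\int_\OO \nabla(F'(u))\cdot\nabla\Delta u = \int_\OO F''(u)\nabla u\cdot\nabla\Delta u$, which by Young is bounded by $\tfrac12\|\nabla\Delta u\|_H^2 + C\|F''(u)\nabla u\|_H^2$; and $\|F''(u)\nabla u\|_H\leq \|F''(u)\|_{L^d(\OO)}\|\nabla u\|_{L^{2d/(d-2)}(\OO)}\lesssim \|F''(u)\|_{L^d}\|u\|_{Z}$ by Sobolev embedding (for $d=3$; for $d\leq2$ it is easier), the first factor being controlled in all moments by Step~1 and the second by Step~2. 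The noise correction involves $\sum_k\|\Delta h_k(u)\|_H^2$, bounded by $C_{2,\H}^2$ times lower-order Sobolev norms of $u$, using again that $h_k\in W_0^{1+2s_0,\infty}$ with $2s_0\geq 2$. BDG plus Gronwall, together with the exponential-moment bound to handle the multiplicative appearance of $\|F''(u)\|_{L^d}$ (a stochastic Gronwall with exponentially integrable coefficient, as in \cite{BOS}), closes the estimate and yields $u\in L^2_\cP(\Omega;L^2(0,T;H^3(\OO)))$.

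\smallskip

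The main obstacle. The delicate point is Step~1 combined with the coupling in Step~3: the coefficient $\|F''(u)\|_{L^d}$ entering the $H^3$-Gronwall inequality is only exponentially integrable, not bounded, so one cannot apply a deterministic Gronwall pathwise with impunity; one needs the exponential moment of Step~1 to be strong enough (all $q\geq1$) to absorb the stochastic Gronwall factor $\exp(\int_0^T\|F''(u)\|_{L^d}\,\d s)$ after H\"older in $\Omega$. Getting the signs and the fast vanishing of $h_k$ at $\pm1$ to line up precisely — so that $G_{s_0}''$ is absorbed by $h_k^2$ and the $F'$-drift is favourable — is the technical heart, and is exactly where hypotheses \ref{H1} and \ref{H3} (the choice $s_0\geq d s_F-1$ and $h_k\in W_0^{1+2s_0,\infty}$) are used.
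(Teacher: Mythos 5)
Your overall architecture is the same as the paper's: an It\^o estimate for $\int_\OO G_{s_0}(u)$, using the Taylor-type vanishing of $h_k$ at $\pm1$ from \ref{H3} to tame the It\^o correction, followed by an It\^o estimate at the level of $\|\Delta u\|_H^2$, whose Gronwall coefficient $\|F''(u)\|^2_{L^2(0,T;L^3(\OO))}$ (for $d=3$) is absorbed, after Burkholder--Davis--Gundy and a Young/H\"older splitting in $\Omega$, by the exponential moments from the first step. However, there is a genuine gap in your Step~1. You discard the drift term $\int F'(u)G_{s_0}'(u)$ after noting its favourable sign, and then claim that (exponential moments of) $\sup_{t}\|G_{s_0}(u(t))\|_{L^1(\OO)}$ control $\|F''(u)\|_{L^d(\OO)}$ under $s_0\geq d s_F-1$. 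In the borderline case $s_0=ds_F-1$ allowed by \ref{H3} this is false: since $F''\leq C_F(1+G_{s_F})$, controlling $\|F''(u)\|_{L^d(\OO)}$ requires $G_{ds_F}(u)\in L^1(\OO)$, i.e.\ the index $s_0+1$, whereas $\|G_{s_0}(u)\|_{L^1(\OO)}$ only reaches the index $s_0$. The missing order is exactly what the paper extracts from the term you threw away: near $\pm1$ one has $F'(u)G_{s_0}'(u)\gtrsim |G_{s_0}'(u)|\gtrsim G_{s_0+1}(u)$ (up to constants), so retaining this dissipation yields exponential moments of the \emph{time-integrated} quantity $\|G_{s_0+1}(u)\|_{L^1(0,T;L^1(\OO))}$ (Proposition~\ref{prop:est_Gs}); since the Gronwall exponent in your Step~3 only involves the time-integrated norm $\|F''(u)\|^2_{L^2(0,T;L^3(\OO))}$, this is precisely sufficient, and $s_0+1\geq ds_F$ is what \ref{H3} guarantees. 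Without keeping that term, your scheme closes only under the stronger hypothesis $s_0\geq ds_F$.

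Two smaller points. First, the bound $\E\sup_{t\in[0,T]}\exp\big(q\|G_{s_0}(u(t))\|_{L^1(\OO)}\big)<\infty$ you announce is stronger than what the supermartingale/exponential-martingale argument directly provides (the paper obtains $\sup_t \E\exp(q\cdot)$ plus $\E\sup_t$ of the first moment only); it would need an extra maximal-inequality argument and, in any case, is not what the proof requires. Second, in Step~2 the $L^\infty$-in-time bound on $\|\nabla u\|_H$ does not give $L^\infty(0,T;Z)$ ``via elliptic regularity'': the $L^2_w(\Omega;L^\infty(0,T;Z))$ part of the statement comes from the $\sup_t\|\Delta u\|_H^2$ term produced by your Step~3 estimate (as in the paper, where a single It\^o computation for $\|\Delta u_n\|_H^2$ delivers both $\sup_t\|u_n\|_Z^2$ and $\int_0^T\|u_n\|_{H^3(\OO)}^2$), so you should state that output explicitly rather than attribute it to the $V$-level estimate.
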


By exploiting the regularity obtained in Theorem~\ref{thm:reg}
we prove the separation property. This will only need a 
further requirement on the magnitude of $s_0$, depending on
the space dimension.

\begin{thm}[Separation property]
  \label{thm2}
  Assume \ref{H1}--\ref{H3}, let $p>2$, and let
  \[
  u_0\in L^p(\Omega,\cF_0; Z) 
  \]
  be strictly separated from $\pm1$, i.e.
  \[
  \exists\,\delta_0\in(0,1):\quad
  |u_0|\leq1-\delta_0 \quad\text{a.e.~in } \Omega\times\OO\,.
  \]
  Suppose also that 
  \[
  s_0>\begin{cases}
  2 \quad&\text{if } d=2\,,\\
  6\quad&\text{if } d=3\,.
  \end{cases}
  \]
  Then, if $u$ is the unique solution given by Proposition~\ref{prop:WP} 
  and Theorem~\ref{thm:reg}, it holds that 
  \[
  \P\left\{\exists\,\delta\in(0,\delta_0]:\;\sup_{(t,x)\in[0,T]\times\overline\OO}
  |u(t,x)|\leq1-\delta\right\}=1\,,
  \]
  namely, $u$ is strictly separated from $\pm1$ almost surely.
\end{thm}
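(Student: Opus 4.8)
The plan is to deduce the separation property from the refined regularity of Theorem~\ref{thm:reg} by means of a pathwise argument. The first step is to translate the Sobolev regularity $u\in L^2_w(\Omega;L^\infty(0,T;Z))\cap L^2_\cP(\Omega;L^2(0,T;H^3(\OO)))$ (together with $u\in L^2_\cP(\Omega;C^0([0,T];V))$) into H\"older continuity of the trajectories on $[0,T]\times\overline\OO$. Concretely, for $\P$-a.e.~$\omega$ we have $u(\omega)\in L^\infty(0,T;H^2(\OO))\cap L^2(0,T;H^3(\OO))$, so by interpolation (and the embedding of the interpolated Sobolev spaces into $C^{0,\alpha}(\overline\OO)$, which holds for $d\le 3$ provided $s_0$ is large enough to ensure the stochastic integral lands in these spaces — this is exactly where the dimension-dependent thresholds $s_0>2$ for $d=2$, $s_0>6$ for $d=3$ enter) we obtain that $u(\omega)$ takes values pointwise in $\overline\OO$ for every $t$, and is jointly continuous in $(t,x)$. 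In particular $M(\omega):=\sup_{[0,T]\times\overline\OO}|u(\omega,\cdot,\cdot)|$ is a well-defined $[0,1]$-valued random variable, and what must be shown is $M(\omega)<1$ a.s.

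The core of the proof is then a pathwise contradiction/contraction argument in the spirit of \cite{sch-seg-stef} as adapted in \cite{BOS}. Fix a good $\omega$. The idea is to use the quantity $\|G_{s_0}(u(t))\|_{L^1(\OO)}$ (or a suitable power/exponential thereof) as a Lyapunov-type functional: applying It\^o's formula to $t\mapsto \int_\OO G_{s_0}(u(t))$, the Laplacian term contributes a good sign (since $G_{s_0}$ is convex on $(-1,1)$ one gets $-\int G_{s_0}''(u)|\nabla u|^2\le 0$, up to handling the boundary term, which vanishes because $u=0$ on $\partial\OO$ and $G_{s_0}'(0)=0$), the potential term $-\int G_{s_0}'(u)F'(u)$ is controlled using \ref{H1} — note $G_{s_0}'(u)F'(u)$ is nonnegative for $|u|$ close to $1$ and the growth $F''\le C_F(1+G_{s_F})$ keeps it subordinate to $G_{s_0+\cdots}$ when $s_0\ge ds_F-1$ — and the It\^o correction term is controlled by \ref{H3}, since $h_k\in W_0^{1+2s_0,\infty}$ and $G_{s_0}''(u)h_k(u)^2$ together with the chain-rule terms are bounded thanks to $h_k$ vanishing at $\pm1$ to high order. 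The stochastic integral $\int_0^t\int_\OO G_{s_0}'(u)\,\H(u)\,\d W$ is a local martingale; a stopping-time argument plus the exponential-moment hypothesis on $G_{s_0}(u_0)$ yields, via a Gronwall-type estimate, that $\E\sup_{t\le T}\exp(c\|G_{s_0}(u(t))\|_{L^1(\OO)})<\infty$, hence $\|G_{s_0}(u(\omega,t))\|_{L^1(\OO)}<\infty$ uniformly in $t$ for a.e.~$\omega$.

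The final step is to upgrade this $L^1$-in-space bound on $G_{s_0}(u)$ to an $L^\infty$ bound on $u$, i.e.~to strict separation. This is where the magnitude of $s_0$ relative to $d$ is used a second time: given that $u(\omega,t,\cdot)\in H^2(\OO)\hookrightarrow C^{0,\beta}(\overline\OO)$ with modulus of continuity controlled uniformly in $t$ (from $u\in L^\infty(0,T;Z)$), if $u(\omega,t_0,x_0)$ were within $\eta$ of $+1$ for some $(t_0,x_0)$, then $u(\omega,t_0,\cdot)\ge 1-\eta-C\eta^{?}$ on a ball of radius $\sim\eta^{1/\beta}$ around $x_0$, forcing $\int_\OO G_{s_0}(u(\omega,t_0))\ge c\,\eta^{d/\beta}(1-u)^{-2s_0}\gtrsim \eta^{d/\beta-2s_0}\to\infty$ as $\eta\to0$ when $2s_0>d/\beta$; since $\beta$ can be taken close to $1/2$ (the Sobolev exponent of $H^2\hookrightarrow C^{0,\beta}$ for $d=3$), the requirement becomes roughly $s_0>d$, and the stated thresholds $s_0>2$ ($d=2$), $s_0>6$ ($d=3$) accommodate this with room to spare. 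This contradicts the uniform finiteness of $\|G_{s_0}(u(\omega,\cdot))\|_{L^1}$, so $M(\omega)<1$, proving the claim. I expect the main obstacle to be the rigorous justification of the It\^o formula for the nonsmooth, singular functional $\varphi\mapsto\int_\OO G_{s_0}(\varphi)$ along the $H^3$-valued solution — in particular controlling all the correction terms near $\pm1$ using only the vanishing order $1+2s_0$ of the $h_k$, and handling the stopping times needed because a priori $G_{s_0}(u)$ need not be integrable before the estimate is closed.
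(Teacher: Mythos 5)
Your proposal follows essentially the same route as the paper: combine the a.s.\ uniform-in-time bound $\sup_{t\in[0,T]}\|G_{s_0}(u(\omega,t))\|_{L^1(\OO)}<+\infty$ (this is Proposition~\ref{prop:est_Gs}, whose proof by regularisation and an It\^o argument for $\int_\OO G_{s_0}(u)$ matches your sketch, so you need not re-derive it) with the uniform-in-time spatial H\"older bound coming from $u(\omega)\in C^0_w([0,T];Z)$ and $Z\embed C^{0,\alpha}(\overline\OO)$, and show that proximity of $u$ to $\pm1$ at a space-time point forces $\int_\OO G_{s_0}(u(t_0))$ to blow up. The paper does this in two steps (Lemma~\ref{lem:sep}: no touching point, via divergence of $\int_\OO|x-x_0|^{-\alpha s_0}\,\d x$ when $\alpha s_0>d$; then a compactness argument on a maximizing sequence using the evaluation functional and weak time-continuity), whereas your version is a direct quantitative lower bound on the separation width $\eta$; both rest on exactly the same ingredients and your one-step variant is fine.

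Two points in your write-up are wrong, though neither destroys the argument under the stated hypotheses. First, the thresholds $s_0>2$ ($d=2$) and $s_0>6$ ($d=3$) have nothing to do with the stochastic integral or with obtaining the regularity of Theorem~\ref{thm:reg}: that theorem only uses \ref{H3}, i.e.\ $s_0\geq d\,s_F-1$; the thresholds enter exclusively in the blow-up step. Second, your lower bound $G_{s_0}(u)\gtrsim(1-u)^{-2s_0}$ near $u=1$ is false: since $1-u^2=(1-u)(1+u)\leq 2(1-u)$, one only has $G_{s_0}(u)\geq 2^{-s_0}(1-u)^{-s_0}$, so the blow-up rate is $\eta^{d/\beta-s_0}$, not $\eta^{d/\beta-2s_0}$. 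Consequently the condition needed is $s_0>d/\beta$, with $\beta<\tfrac12$ for $d=3$ and $\beta<1$ for $d=2$, which is precisely $s_0>6$, respectively $s_0>2$: the hypotheses are sharp for this method, not satisfied ``with room to spare'', and your suggestion that roughly $s_0>d$ would suffice is not justified by this argument.
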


\begin{remark}
  Note that the separation of $u_0$ assumed in Theorem~\ref{thm2}
  guarantees straightaway the existence of the exponential moments 
  required in Theorem~\ref{thm:reg}. Hence, the corresponding 
  unique solution $u$ enjoys the additional regularity of Theorem~\ref{thm:reg}.
\end{remark}

  Let us point out that by 
  Theorem~\ref{thm:reg} one has that 
  $u(\omega)\in C^0([0,T]; V)\cap L^\infty(0,T; Z)$ 
  for $\P$-almost every $\omega\in\Omega$. This implies that 
  $u(\omega)\in C^0_w([0,T]; Z)$ 
  for $\P$-almost every $\omega\in\Omega$.
  Since one has $Z\embed C^{0,\alpha}(\overline\OO)$ 
  for every $\alpha\in(0,\frac12)$ in dimension $d=1,2,3$, it is possible to evaluate
  almost every trajectory of $u$ pointwise in space and time, and
  \beq
  \label{pointwise}
  \norm{u(\omega)}_{L^\infty((0,T)\times\OO)}=
  \sup_{(t,x)\in[0,T]\times\overline\OO}|u(\omega,t,x)|
  \quad\text{for $\P$-a.e.~}\omega\in\Omega\,.
  \eeq
  These considerations, together with 
  Theorem~\ref{thm2}, ensure that 
\[
\exists\,\Omega_0\in\cF\,,\quad\P(\Omega_0)=1:\quad
\forall\,\omega\in\Omega_0\,,
\quad\exists\,\delta(\omega)\in(0,1):\quad
\norm{u(\omega)}_{L^\infty(Q)}\leq 1-\delta(\omega)\,.
\]
This amounts to saying that almost every trajectory of $u$
its strictly separated from $\pm1$ in space and time. Nonetheless, 
note that the magnitude of separation depends on $\omega\in\Omega_0$,
hence identifies a positive random variable on $\Omega$.

\begin{remark}[$d=1$]
  Let us spend a few words on what happens in dimension $d=1$.
  In this case, in order to obtain the separation property on $u$, 
  the stronger regularity of Theorem~\ref{thm:reg} is not needed.
  Indeed, by simply assuming that $u_0\in L^2(\Omega; V)$ is separated from $\pm1$,
  one has that $u(\omega)\in C^0_w([0,T]; V)$ for 
  $\P$-almost every $\omega\in\Omega$, where 
  $V=H^1_0(\OO)\embed C^{0,\frac12}(\overline\OO)$.
  Hence, the pointwise evaluation as in \eqref{pointwise} holds
  and the proof of Theorem~\ref{thm2} can be easily adapted by
  substituting the space $Z$ with $V$: this only requires indeed that 
  $u_0\in L^2(\Omega; V)$ and
  the assumption that $s_0>2$.
\end{remark}

\subsection{Further developments}
It would be interesting to 
prove qualitative properties of the separation layer, such as
probability of separation of at least a given amount, 
exponential probability estimates, and convergence of 
the random separation layer towards the deterministic one 
as the noise vanishes. However, the main tool used in \cite{BOS},
namely the Bernstein inequality, cannot be directly applied in our case:
this is due to fact that the estimate on $\|G_{s_0}(u)\|_{L^1(\OO)}$
and $\|u\|_Z$ cannot be concatenated in a ``affine'' way, but 
they are in an exponential relation with one another. This seems 
a technical issue that requires further novel ideas.
Furthermore, from the modelling point of view, it would be 
relevant to investigate the separation property also for the stochastic 
Cahn-Hilliard equation. This requires an estimate in some H\"older-space
for $u$, at least in $L^\infty$ of time: however, the fourth-order structure 
of the equation prevents to use classical techniques 
that fit instead to second-order equations.


\section{Regularity}
\label{sec:reg}
In this section we prove Theorem~\ref{thm:reg}.
Let us recall that by \eqref{Gs} one has, for $s\geq1$,
\[
  G_s:(-1,1)\to\erre\,, \qquad G_s(x):=\frac1{(1-x^2)^s}\,, \quad x\in(-1,1)\,.
\]
Firstly, we give the following preliminary result,
which refines the ones proved in 
\cite[Thm.~2.2]{bertacco} and \cite[Lem.~3.1]{BOS}.

\begin{prop}\label{prop:est_Gs}
  Assume \ref{H1}--\ref{H3} and 
  \[
  u_0\in L^2(\Omega,\cF_0; V)\,, \qquad
  \exp\left(\|G_{s_0}(u_0)\|_{L^1(\OO)}\right) \in 
  L^q(\Omega,\cF_0)\quad\forall\,q\geq1\,.
  \]
  Then, the unique solution $u$ given by Proposition~\ref{prop:WP}
  also satisfies, for every $q\geq1$,
  \begin{equation}\label{est:Gs_exp}
  \E\sup_{t\in[0,T]}\int_\OO G_{s_0}(u(t))
  +\E\exp\left(q\norm{G_{s_0+1}(u)}_{L^1(0,T; L^1(\OO))}\right)
  <+\infty\,.
  \end{equation}
\end{prop}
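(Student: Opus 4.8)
\textbf{Proof proposal for Proposition~\ref{prop:est_Gs}.}
The plan is to apply It\^o's formula to the functional $v\mapsto \int_\OO G_{s_0}(v)$ along the solution $u$. Since $G_{s_0}$ is convex, smooth on $(-1,1)$, and its first derivative $G_{s_0}'(x)=2s_0 x(1-x^2)^{-s_0-1}$ blows up at $\pm1$, this is a singular-potential type estimate and the computation must first be carried out on a suitable approximation (e.g.\ the Yosida-regularised or truncated scheme used in \cite{bertacco} to construct $u$) and then passed to the limit; I will present the formal computation and indicate the approximation at the end. Testing against $-\Delta u$ produces the good term $\int_\OO G_{s_0}''(u)|\nabla u|^2$, which is nonnegative; testing against $-F'(u)$ produces a term $-\int_\OO G_{s_0}'(u)F'(u)$, and here the key point is that both $G_{s_0}'$ and $F'$ have the \emph{same sign} as their argument near $\pm1$, so this term is $\le 0$ and can be dropped (or more precisely, bounded above using \ref{H1}); the It\^o correction term is $\frac12\sum_k\int_\OO G_{s_0}''(u)h_k(u)^2$, and the martingale part is $\sum_k\int_\OO G_{s_0}'(u)h_k(u)\,\d W_k$.

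The crucial structural observation is that $h_k\in W_0^{1,\infty}(-1,1)$ forces $h_k$ to vanish at $\pm1$, so $h_k(x)^2\lesssim (1-x^2)^2\|h_k\|_{W^{1,\infty}}^2$ near the endpoints; combined with $G_{s_0}''(x)\sim (1-x^2)^{-s_0-2}$, one gets $G_{s_0}''(x)h_k(x)^2\lesssim (1-x^2)^{-s_0}\|h_k\|^2_{W^{1,\infty}}$, i.e.\ the It\^o correction is controlled by $C_{1,\H}^2\int_\OO G_{s_0}(u)$, which is exactly the quantity being estimated. Thus, after taking expectations (the martingale term vanishes, at least after a localisation argument), one arrives at a Gr\"onwall-type inequality
\[
\E\int_\OO G_{s_0}(u(t)) + \E\int_0^t\!\!\int_\OO G_{s_0}''(u)|\nabla u|^2
\le \E\int_\OO G_{s_0}(u_0) + C\int_0^t \E\int_\OO G_{s_0}(u(s))\,\d s + Ct,
\]
which gives the first bound in \eqref{est:Gs_exp} once we know $\E\int_\OO G_{s_0}(u_0)<\infty$ — and this follows from the assumed exponential integrability of $\|G_{s_0}(u_0)\|_{L^1(\OO)}$, since $x\le e^x$. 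For the supremum in time on the left-hand side, rather than Gr\"onwall I would use the Burkholder--Davis--Gundy inequality on the martingale term before taking $\sup_t$: the quadratic variation is $\sum_k\int_0^T(\int_\OO G_{s_0}'(u)h_k(u))^2\,\d t$, and using $|G_{s_0}'(x)h_k(x)|\lesssim (1-x^2)^{-s_0+1/2}\|h_k\|_{W^{1,\infty}}\lesssim G_{s_0}(x)$ (again exploiting the vanishing of $h_k$ at the endpoints), BDG bounds this martingale contribution by $\frac12\E\sup_t\int_\OO G_{s_0}(u(t)) + C\,\E\int_0^T(\int_\OO G_{s_0}(u))^2$, where the last term is finite by the already-established $L^1$-in-time bound on $G_{s_0}(u)$ together with the $L^\infty$-in-time control just obtained in $L^1(\Omega)$ — here one has to be a little careful and perhaps bootstrap, estimating $\E(\sup_t\int_\OO G_{s_0}(u(t)))$ after first controlling $\E\int_0^T(\int_\OO G_{s_0}(u))^2$ via a separate application of It\^o to $(\int_\OO G_{s_0}(u))^2$ or to $\int_\OO G_{2s_0}(u)$.

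The more delicate part is the exponential moment $\E\exp(q\|G_{s_0+1}(u)\|_{L^1(0,T;L^1(\OO))})$. The mechanism producing the extra power of $G$ is the diffusion term: writing $G_{s_0}''(x)=2s_0(1-x^2)^{-s_0-1}+4s_0(s_0+1)x^2(1-x^2)^{-s_0-2}$, the leading piece is comparable to $(1-x^2)^{-s_0-2}=G_{s_0+2}(x)$, and testing against $-\Delta u$ actually yields a good term $\gtrsim \int_\OO G_{s_0+2}(u)|\nabla u|^2 \gtrsim \int_\OO |\nabla G_{(s_0+1)/1}(u)|^2$-type quantities; more to the point, one can instead choose the exponent in the It\^o functional so that the dissipation controls $\int_\OO G_{s_0+1}(u)$ after using a Poincar\'e/Sobolev inequality on $(1-u^2)^{-(s_0+1)/2}\in H^1_0$. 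The standard route to the exponential moment (cf.\ \cite{BOS,bertacco}) is: apply It\^o to $t\mapsto \exp(q\int_0^t\int_\OO G_{s_0+1}(u) + \Phi(u(t)))$ for an appropriate auxiliary functional $\Phi$ (here $\Phi(u)=q'\int_\OO G_{s_0}(u)$ for a suitable $q'$), arrange via Young's inequality that the drift of the exponent is nonpositive — the dissipative $H^1$-term must absorb both the It\^o correction \emph{and} the square of the martingale's integrand, which is where the precise gap $s_0\ge ds_F-1$ and the vanishing rate of $h_k$ at $\pm1$ enter — so that the resulting process is a supermartingale, and conclude $\E\exp(\text{exponent at }T)\le \E\exp(\text{exponent at }0)=\E\exp(q'\|G_{s_0}(u_0)\|_{L^1(\OO)})<\infty$ by hypothesis. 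The main obstacle I anticipate is exactly this bookkeeping in the exponential It\^o estimate: one must verify that the quadratic-in-the-martingale term $q^2\sum_k(\int_\OO(q'G_{s_0}'(u)+\ldots)h_k(u))^2$ is dominated by the dissipation $\int_\OO G_{s_0}''(u)|\nabla u|^2$ (via a Gagliardo--Nirenberg/Sobolev embedding converting the $L^1$-in-space dissipation control into an $L^2$-type control on $G_{s_0}'(u)h_k(u)$), and that all constants can be made to close for $q$ arbitrarily large — this is the step where the hypotheses \ref{H1}--\ref{H3} are genuinely used and where the analogous estimates of \cite{bertacco,BOS} must be refined.
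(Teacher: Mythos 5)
Your proposal has the right skeleton (It\^o formula for $\int_\OO G_{s_0}(u)$ through a regularised problem, BDG for the supremum, an exponential supermartingale for the exponential moment), but it misses the two ideas on which the paper's proof actually rests, and at those two points your argument has genuine gaps. First, you discard the term coming from $F'$: you note that $G_{s_0}'$ and $F'$ have the same sign near $\pm1$ and propose to drop $\int_0^t\int_\OO F'(u)G_{s_0}'(u)$ as a term of favourable sign, planning instead to extract the $G_{s_0+1}$-control from the gradient dissipation $\int G_{s_0}''(u)|\nabla u|^2$ via a Sobolev/Poincar\'e argument. In the paper it is exactly the opposite: the gradient term is the one that gets dropped, while the product $F'(u)G_{s_0}'(u)$ is kept and, using \ref{H1} (namely $F'(r)\to\pm\infty$ as $r\to\pm1$, so $|F'|\geq 2$ outside a compact subinterval $[\ell_1,\ell_2]$), it is bounded below by $2\int_0^t\int_\OO|G_{s_0}'(u)|-C$; since $G_{s_0+1}\leq C(1+|G_{s_0}'|)$, this is precisely the pathwise, \emph{linear} control of $\|G_{s_0+1}(u)\|_{L^1(0,T;L^1(\OO))}$ that enters the exponential. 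Your alternative route does not deliver this: the dissipation controls $\int_0^T\|\nabla (1-u^2)^{-s_0/2}\|_{L^2(\OO)}^2$, which by Sobolev embedding only yields $\|G_{3s_0}(u)\|_{L^1(\OO)}^{1/3}$-type quantities, i.e.\ a sublinear control of $\|G_{s_0+1}(u)\|_{L^1(\OO)}$; this cannot simply be placed on the left-hand side of the pathwise inequality and fed into the supermartingale argument for arbitrary $q$.

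Second, your quantitative use of the noise assumptions is too weak. You only exploit the first-order vanishing $h_k\in W_0^{1,\infty}(-1,1)$, giving $G_{s_0}''(u)h_k(u)^2\lesssim G_{s_0}(u)$ and $|G_{s_0}'(u)h_k(u)|\lesssim G_{s_0}(u)$, so that the It\^o correction is controlled by $\int_\OO G_{s_0}(u)$ and the quadratic variation by $\int_0^T\bigl(\int_\OO G_{s_0}(u)\bigr)^2$. This forces the Gr\"onwall/bootstrap detour you describe for the first bound and, more seriously, it breaks the exponential estimate: in the supermartingale argument one must subtract $\frac\alpha2$ times the quadratic variation with $\alpha=2q$ arbitrary, and a term quadratic in $\int_\OO G_{s_0}(u)$ cannot be absorbed by the linear left-hand side, nor reabsorbed for every $q$. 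The paper instead uses the full strength of \ref{H3}: since $h_k\in W_0^{1+2s_0,\infty}(-1,1)$, a Taylor expansion with integral remainder at $\pm1$ gives $|h_k(x)|\lesssim \|h_k\|_{W^{1+s_0,\infty}}|1\mp x|^{s_0+2}$ (and the analogous higher-order bound for the quadratic variation), so that \emph{both} the It\^o correction and $\sum_k\int_\OO|G_{s_0}'(u)|^2|h_k(u)|^2$ are bounded by deterministic constants depending only on $C_{2,\H}$, $T$, $|\OO|$. This is what makes the BDG estimate immediate (no bootstrap) and makes $\exp(2qM_{2q,\lambda})$ a genuine supermartingale for every $q$, after which Young's inequality and the assumed exponential integrability of $\|G_{s_0}(u_0)\|_{L^1(\OO)}$ close the argument. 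You flag this bookkeeping as the anticipated obstacle, but the Gagliardo--Nirenberg route you sketch for it is not the needed mechanism; without the sign-structure of $F'G_{s_0}'$ and the high-order Taylor bounds on $h_k$, the proof as proposed does not close.
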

\begin{proof}[Proof of Proposition~\ref{prop:est_Gs}]
  We would like to apply It\^o formula to $G_{s_0}(u)$.
  Due to the singularity at $\pm1$, this does not follow from the classical results
  but needs to be obtained via regularisation techniques.
  Precisely, by uniqueness of the solution $u$, $u$ can be seen
  as the limit in natural topologies 
  (the ones appearing in Proposition~\ref{prop:WP}) 
  of the sequence $(u_\lambda)_\lambda$, 
  where $u_\lambda\in L^2_\cP(\Omega; C^0([0,T]; H))$ is the unique solution to 
  \[
  u_\lambda(t) -\int_0^t\Delta u_\lambda(s)\,\d s + \int_0^tF_\lambda'(u_\lambda(s))\,\d s
  =u_0 + \int_0^t\mathcal H(J_\lambda(u_\lambda(s)))\,\d W(s)
  \]
  for every $t\in[0,T]$, $\P$-almost surely. Here,
  $F_\lambda':\erre\to\erre$ is a Yosida-type approximation of $F'$.
  For example, one can take $F_\lambda':=F'\circ J_\lambda$, where
  $J_\lambda:=\erre\to\erre$ is the resolvent of the nondecreasing 
  function $x\mapsto F'(x) + C_F x$, $x\in(-1,1)$.
  Analogously, 
  one could introduce the Moreau-Yosida-type approximation 
  ${G_{s_0,\lambda}}$ of $G_{s_0}$
  as $G_{s_0,\lambda}:=G_{s_0}\circ J_\lambda$.
  By It\^o formula we have then
  \begin{align*}
  &\int_\OO G_{s_0,\lambda}(u_\lambda(t)) 
  + \int_0^t\int_\OO G_{s_0,\lambda}''(u_\lambda(r))|\nabla u_\lambda(r)|^2\,\d r
  +\int_0^t\int_\OO F_\lambda'(u_\lambda(r))G_{s_0,\lambda}'(u_\lambda(r))\,\d r\\
  &\qquad=\int_\OO G_{s_0}(u_0)
  +\int_0^t\left(G_{s_0,\lambda}'(u_\lambda(r)), \H(J_\lambda(u_\lambda(r)))\,\d W(r)\right)_H\\
  &\qquad+\frac12\int_0^t\sum_{k=0}^\infty\int_\OO
  G_{s_0,\lambda}''(u_\lambda(r))|h_k(J_\lambda(u_\lambda(r)))|^2\,\d r
  \end{align*}
  for every $t\in[0,T]$, $\P$-almost surely. Now, the second term on the left-hand side 
  is nonnegative by monotonicity of $G_{s_0,\lambda}'$. 
  Moreover, by assumption \ref{H1},
  there are $\ell_1,\ell_2\in(-1,1)$ with 
  $\ell_1<\ell_2$ such that $F'(x)\leq -2$ for all $x\in(-1,\ell_1)$
  and $F'(x)\geq 2$ for all $x\in(\ell_2,1)$. Consequently, 
  noting also that $G_{s_0}'(x)\leq 0$ for all $x\in(-1,\ell_1)$
  and $G_{s_0}'(x)\geq 0$ for all $x\in(\ell_2,1)$,
  we have 
  \begin{align*}
   &\int_0^t\int_\OO F_\lambda'(u_\lambda(r))G_{s_0,\lambda}'(u_\lambda(r))\,\d r\\
   &=\int_0^t\int_\OO F'(J_\lambda(u_\lambda(r)))G_{s_0}'(J_\lambda(u_\lambda(r)))\,\d r\\
   &=\int_0^t\int_{\{-1<J_\lambda(u_\lambda(r))<\ell_1\}} F'(J_\lambda(u_\lambda(r)))
   G_{s_0}'(J_\lambda(u_\lambda(r)))\,\d r\\
   &\qquad+\int_0^t\int_{\{\ell_1\leq J_\lambda(u_\lambda(r))
   \leq \ell_2\}} F'(J_\lambda(u_\lambda(r)))
   G_{s_0}'(J_\lambda(u_\lambda(r)))\,\d r\\
   &\qquad+ \int_0^t\int_{\{\ell_2<J_\lambda(u_\lambda(r))<1\}}
   F'(J_\lambda(u_\lambda(r)))G_{s_0}'(J_\lambda(u_\lambda(r)))\,\d r\\
   &\geq 2\int_0^t\int_{\{J_\lambda(u_\lambda(r))\in(-1,\ell_1)\cup(\ell_2,1)\}}
   |G_{s_0}'(J_\lambda(u_\lambda(r)))|\,\d r
   -T|\OO|\max_{x\in[\ell_1,\ell_2]}|F'(x)||G_{s_0}'(x)|\\
   &\geq2\int_0^t\int_\OO|G_{s_0, \lambda}'(u_\lambda(r))|
   -T|\OO|\max_{x\in[\ell_1,\ell_2]}\left(|F'(x)||G_{s_0}'(x)| + 2|G_{s_0}'(x)|\right)\,.
  \end{align*}
  Eventually, noting that 
  $G_{s_0, \lambda}''(x)\leq \frac{C}{(1-J_\lambda(x)^2)^{s_0+2}}$ for all $x\in\erre$, and
  recalling assumption \ref{H3} and , we infer that 
  \begin{align*}
  &\frac12\int_0^t\sum_{k=0}^\infty\int_\OO
  G_{s_0,\lambda}''(u_\lambda(r))|h_k(J_\lambda(u_\lambda(r)))|^2\,\d r\\
  &\qquad\leq \frac C2\int_0^t\sum_{k=0}^\infty\int_\OO
  \frac{|h_k(J_\lambda(u_\lambda(r)))|^2}{|1+J_\lambda(u_\lambda(r))|^{s_0+2}
  |1-J_\lambda(u_\lambda(r))|^{s_0+2}}\,.
  \end{align*}
  On the right-hand side, 
  by Taylor's expansion with integral remainder and assumption \ref{H3}
  we get, for all $x\in[-1,1]$,
  \begin{align*}
    h_k(x)&=\int_{-1}^x
    \frac{h_k^{(s_0+2)}(\sigma)}{(s_0+1)!}(x-\sigma)^{s_0+1}\,\d \sigma\,,\\
    h_k(x)&=\int_{1}^x
    \frac{h_k^{(s_0+2)}(\sigma)}{(s_0+1)!}(x-\sigma)^{s_0+1}\,\d \sigma\,,
  \end{align*}
  so that, for all $x\in[-1,1]$,
  \begin{align*}
  |h_k(x)|&\leq \frac{\|h_k\|_{W^{1+s_0, \infty}(-1,1)}}{(s_0+2)!}|x+1|^{s_0+2}\,,\\
  |h_k(x)|&\leq \frac{\|h_k\|_{W^{1+s_0, \infty}(-1,1)}}{(s_0+2)!}|x-1|^{s_0+2}\,.
  \end{align*}
  Taking these estimates into account, we infer that
  \begin{align}
  \nonumber
  &\frac12\int_0^t\sum_{k=0}^\infty\int_\OO
  G_{s_0,\lambda}''(u_\lambda(r))|h_k(J_\lambda(u_\lambda(r)))|^2\,\d r\\
  \nonumber
  &\qquad\leq\frac C2\frac1{((s_0+2)!)^2}
  \int_0^t\sum_{k=0}^\infty\int_\OO
  \|h_k\|_{W^{1+s_0, \infty}(-1,1)}^2\\
  &\qquad\leq \frac C2\frac{C^2_{2,\H}}{((s_0+2)!)^2}
  T|\OO|\,.
  \label{est:aux}
  \end{align}
  We deduce that there exists a positive constant $C$, depending only on  
  $T$, $\OO$, $s_0$, $\ell_1$, $\ell_2$, $F'$, and $C_{2,\H}$, 
  and independent of $\lambda$,  such that 
  \begin{align}
  \label{est:aux3}
  \nonumber
  &\int_\OO G_{s_0,\lambda}(u_\lambda(t)) 
  +\int_0^t\int_\OO|G_{s_0,\lambda}'(u_\lambda(r))|\\
  &\qquad\leq C+\int_\OO G_{s_0}(u_0)
  +\int_0^t\left(G_{s_0,\lambda}'(u_\lambda(r)), \H(J_\lambda(u(r)))\,\d W(r)\right)_H\,.
  \end{align}
  Now, we note that 
  by analogous estimates as in \eqref{est:aux} 
  we get 
  \begin{align}
  \nonumber
  &\int_0^t\sum_{k=0}^\infty\int_\OO
  |G_{s_0,\lambda}'(u_\lambda(r))|^2|h_k(J_\lambda(u_\lambda(r)))|^2\,\d r\\
  \nonumber
  &\qquad\leq C\int_0^t\sum_{k=0}^\infty\int_\OO
  \frac{|h_k(J_\lambda(u_\lambda(r)))|^2}{|1+J_\lambda(u_\lambda(r))|^{2s_0+2}
  |1-J_\lambda(u_\lambda(r))|^{2s_0+2}}\\
  \nonumber
  &\qquad\leq C\frac1{((2s_0+2)!)^2}
  \int_0^t\sum_{k=0}^\infty\int_\OO
  \|h_k\|_{W^{1+2s_0, \infty}(-1,1)}^2\\
  \label{est:aux2}
  &\qquad\leq C\frac{C^2_{2,\H}}{((2s_0+2)!)^2}
  T|\OO|\,,
  \end{align}
  so that a direct application of the Burkholder-Davis-Gundy inequality 
  together with \eqref{est:aux2} yields 
  \[
  \E\sup_{t\in[0,T]}\int_\OO G_{s_0, \lambda}(u_\lambda(t)) 
  \leq C+\E\int_\OO G_{s_0}(u_0)\,,
  \] 
  and the first term of \eqref{est:Gs_exp} is estimated.
  Moreover, in order to handle the second term of \eqref{est:Gs_exp},
  we add and subtract  the term 
  \[
  \frac{\alpha}{2}\int_0^t\sum_{k=0}^\infty\int_\OO
  |G_{s_0, \lambda}'(u_\lambda(r))|^2|h_k(J_\lambda(u_\lambda(r)))|^2\,\d r\,,
  \]
  in \eqref{est:aux3}
  with $\alpha\geq1$ to be chosen later. 
  By virtue of \eqref{est:aux2} we infer that 
  there exists a constant $C_\alpha>0$ such that, setting 
  \begin{align*}
  M_{\alpha,\lambda}(t)&:=
  \int_0^t\left(G_{s_0,\lambda}'(u_\lambda(r)), \H(J_\lambda(u_\lambda(r)))\,\d W(r)\right)_H\\
  \quad&-\frac{\alpha}{2}\int_0^t\sum_{k=0}^\infty\int_\OO
  |G_{s_0, \lambda}'(u_\lambda(r))|^2|h_k(J_\lambda(u_\lambda(r)))|^2\,\d r\,, \quad t\in[0,T]\,,
  \end{align*}
  we have
  \begin{align*}
  &\int_\OO G_{s_0, \lambda}(u_\lambda(t)) 
  +\int_0^t\int_\OO|G_{s_0, \lambda}'(u_\lambda(r))|
  \leq C_\alpha+\int_\OO G_{s_0}(u_0)+ M_{\alpha,\lambda}(t)
  \quad\forall\,t\in[0,T]\,,\quad\P\text{-a.s.}\,.
  \end{align*}
  Hence, for every $q\geq1$ we have, by the Young inequality, 
  \begin{align*}
  &\exp\left(q\int_\OO G_{s_0, \lambda}(u_\lambda(t))\right)
  +\exp\left(q \int_0^t\int_\OO|G_{s_0,\lambda}'(u_\lambda(r))|\right)\\
  &\qquad\leq 2\exp \left[q\left(C_\alpha+\int_\OO G_{s_0}(u_0)\right)\right]
  \cdot\exp\left(qM_{\alpha,\lambda}(t)\right)\\
  &\qquad\leq \exp \left[2q\left(C_\alpha+\int_\OO G_{s_0}(u_0)\right)\right]
  +\exp\left(2qM_{\alpha,\lambda}(t)\right)
   \quad\forall\,t\in[0,T]\,,\quad\P\text{-a.s.}
  \end{align*}
  By choosing now $\alpha:=2q$, one has that 
  \[
  t\mapsto
  \exp\left(2qM_{2q,\lambda}(t)\right)\,, \quad t\in[0,T]\,,
  \]
  is a real positive supermartingale, so that for every $\lambda>0$
  \[
  \E\exp\left(2qM_{2q,\lambda}(t)\right)\leq 1 \quad\forall\, t\in[0,T]\,.
  \]
  Consequently, taking expectations and supremum in time yields
  \begin{align*}
  &\sup_{t\in[0,T]}\E\exp\left(q\int_\OO G_{s_0, \lambda}(u_\lambda(t))\right)
  +\E\exp\left(q \int_0^T\int_\OO|G_{s_0, \lambda}'(u_\lambda(r))|\right)\\
  &\qquad\leq 
  C_{q}\E\exp \left(2q\int_\OO G_{s_0}(u_0)\right)+1
  \end{align*}
  for some constant $C_q>0$. Now, by exploiting the 
  properties of the Moreau-Yosida approximation $G_{s_0, \lambda}$,
  by lower semicontinuity one gets 
  \[
  \exp\left(\norm{G_{s_0}'(u)}_{L^1(0,T; L^1(\OO))}\right)
  \in L^q(\Omega) \quad\forall\,q\geq1
  \]
  and the conclusion follows from the fact that 
  $|G_{s_0+1}|\leq C|G_{s_0}'|$ for a certain constant $C$ depending on $s_0$
  and the arbitrariness of $q\geq1$.
\end{proof}

\begin{proof}[Proof of Theorem~\ref{thm:reg}]
  Let us deal with the case $d=3$ at first.
  Here we will use the symbol $C$ to denote positive constants,
  only depending on the structural assumptions, whose values
  might be updated from line to line.
  For every $n\in\enne$, let 
  $P_n$ be the orthogonal projection on $H_n:=
  \operatorname{span}\{e_1,\ldots,e_n\}$, 
  where $(e_k)_k$ is a complete orthonormal system of $H$ made
  of eigenvectors of $-\Delta$ with homogeneous Dirichlet conditions.
  Then, by uniqueness of the solution $u$, $u$ can be seen
  as the limit in natural topologies 
  (the ones appearing in Proposition~\ref{prop:WP}) of the sequence $(u_n)_n$, 
  where $u_n\in L^2_\cP(\Omega; C^0([0,T]; H_n))$ is the unique solution to 
  \[
  u_n(t) -\int_0^t\Delta u_n(s)\,\d s + \int_0^tP_nF_n'(u_n(s))\,\d s
  =P_nu_0 + \int_0^tP_n\mathcal H(u_n(s))\,\d W(s)
  \]
  for every $t\in[0,T]$, $\P$-almost surely. Here, 
  $F_n':\erre\to\erre$ is a Yosida-type approximation of $F'$.
  Hence, the It\^o formula yields 
  \begin{align*}
  &\frac12\norm{\Delta u_n(t)}_H^2
  +\int_0^t\norm{\nabla\Delta u_n(s)}_H^2\,\d s
  +\int_0^t\int_\OO\Delta (P_nF_n'(u_n(s)))\Delta u_n(s)\,\d s\\
  &=\frac12\norm{\Delta (P_nu_0)}_H^2
  +\int_0^t\left(\Delta u_n(s), \Delta (P_n\mathcal H(u_n(s)))\,\d W(s)\right)_H\\
  &\qquad+\frac12\int_0^t\norm{\Delta (P_n\mathcal H(u_n(s)))}^2_{\cL^2(U,H)}\,\d s
  \end{align*}
  for every $t\in[0,T]$, $\P$-almost surely.
  Now, exploiting the fact that $u_n\in H_n$ almost everywhere, note that 
  \begin{align*} 
  \int_0^t\int_\OO\Delta (P_nF_n'(u_n(s)))\Delta u_n(s)\,\d s
  &=\int_0^t\int_\OO\Delta (F_n'(u_n(s)))\Delta u_n(s)\,\d s\\
  &=-\int_0^t\int_\OO\nabla F_n'(u_n(s))\cdot\nabla\Delta u_n(s)\,\d s\\
  &=-\int_0^t\int_\OO F_n''(u_n(s))\nabla u_n(s)\cdot\nabla\Delta u_n(s)\,\d s\,,
  \end{align*}
  so that by the H\"older and Young inequalities,
  as well as the inclusion $V\embed L^6(\OO)$,
  we get 
  \begin{align*}
  &\left|\int_0^t\int_\OO\Delta (P_nF_n'(u_n(s)))\Delta u_n(s)\,\d s\right|\\
  &\qquad\leq \int_0^t\norm{F_n''(u_n(s))}_{L^3(\OO)}
  \norm{\nabla u_n(s)}_{L^6(\OO)}
  \norm{\nabla\Delta u_n(s)}_H\,\d s\\
  &\qquad\leq \frac12\int_0^t
  \norm{\nabla\Delta u_n(s)}^2_H\,\d s
  +C\int_0^t\norm{F_n''(u_n(s))}^2_{L^3(\OO)}
  \norm{u_n(s)}_{Z}^2\,\d s
  \end{align*}
  for some positive constant $C$ depending only on $\OO$.
  Moreover, by \ref{H3} and the inclusion $Z\embed W^{1,4}(\OO)$
  we have that 
  \begin{align*}
  \int_0^t\norm{P_n\Delta \mathcal H(u_n(s))}^2_{\cL^2(U,H)}\,\d s
  &\leq\sum_{k=0}^\infty\int_0^t\norm{\Delta h_k(u_n(s))}_H^2\,\d s\\
  &=\sum_{k=0}^\infty\int_0^t
  \norm{h''_k(u_n(s))|\nabla u_n(s)|^2 + h_k'(u_n(s))\Delta u_n(s)}_H^2\,\d s\\
  &\leq 2C_{2,\H}^2\int_0^t\left(
  \norm{\nabla u_n(s)}^2_{L^4(\OO)} + \norm{\Delta u_n(s)}^2_H\right)\,\d s\\
  &\leq C\int_0^T\norm{u_n(s)}_Z^2\,\d s\,,
  \end{align*}
  possibly updating the value of the constant $C$. 
  Hence, taking supremum in time in It\^o formula and
  rearranging the terms, we have 
  \begin{align*}
    \sup_{r\in[0,t]}\norm{u_n(r)}_Z^2
    +\int_0^t\norm{u_n(s)}_{H^3(\OO)}^2\,\d s
    &\leq C\norm{u_0}_Z^2 + 
    C\sup_{r\in[0,T]}\left|
    \int_0^r\left(\Delta u_n(s), \Delta \mathcal H(u_n(s))\,\d W(s)\right)_H
    \right|\\
    &\quad+ C
    \int_0^t\left(1+\norm{F_n''(u(s))}^2_{L^3(\OO)}\right)
    \sup_{r\in[0,s]}\norm{u_n(r)}_{Z}^2\,\d s\,,
  \end{align*}
  and the Gronwall lemma yields 
  \begin{align*}
    &\sup_{t\in[0,T]}\norm{u_n(t)}_Z^2
    +\int_0^T\norm{u_n(s)}_{H^3(\OO)}^2\,\d s\\
    &\qquad\leq C\left[
    \norm{u_0}_Z^2 + 
    \sup_{r\in[0,T]}\left|
    \int_0^r\left(\Delta u_n(s), \Delta \mathcal H(u_n(s))\,\d W(s)\right)_H
    \right|\right]
    \exp\left(\norm{F_n''(u_n)}^2_{L^2(0,T; L^3(\OO))}\right)\,.
  \end{align*}
Let now $\ell\in(2,\min\{4,p,\frac{4p}{2+p}\})$:
the Young inequality implies that 
\begin{align*}
    &\E\sup_{t\in[0,T]}\norm{u_n(t)}_Z^2
    +\E\int_0^T\norm{u_n(s)}_{H^3(\OO)}^2\,\d s\\
    &\qquad\leq C\left[
    \norm{u_0}_{L^\ell(\Omega; Z)}^\ell+
    \E\sup_{r\in[0,T]}\left|
    \int_0^r\left(\Delta u_n(s), \Delta \mathcal H(u_n(s))\,\d W(s)\right)_H
    \right|^{\frac \ell2}\right]\\
    &\qquad+ C\E\exp\left(\frac \ell{\ell-2}\norm{F_n''(u_n)}^2_{L^2(0,T; L^3(\OO))}\right)\,.
\end{align*}
Now,
  by the Burkholder-Davis-Gundy and Young inequalities,
  analogous computations show that 
  \begin{align*}
  &\E\sup_{t\in[0,T]}\left|
  \int_0^t\left(\Delta u_n(s), \Delta \mathcal H(u_n(s))\,\d W(s)\right)_H
  \right|^{\frac \ell2}\\
  &\qquad\leq C\E\left[\left(\int_0^T\norm{\Delta u_n(s)}_H^2
  \norm{\Delta \H(u_n(s))}_{\cL^2(U,H)}^2\,\d s\right)^{\frac \ell4}\right]\\
  &\qquad\leq
  C\E\left[\sup_{t\in[0,T]}\norm{\Delta u_n(t)}_H^{\frac \ell2}
  \norm{\Delta\H(u_n)}_{L^2(0,T;\cL^2(U,H))}^{\frac \ell2}\right]\\
  &\qquad\leq\frac14\E\sup_{t\in[0,T]}\norm{\Delta u_n(t)}_H^2
  +C\E\norm{u_n}^{\frac{2\ell}{4-\ell}}_{L^2(0,T;Z)}\,,
  \end{align*}
  possibly updating the value of the constant $C$. Hence, 
  noting that $\frac{2\ell}{4-\ell}<p$, we infer that 
  \begin{align*}
    &\E\sup_{t\in[0,T]}\norm{u_n(t)}_Z^2
    +\E\int_0^T\norm{u_n(s)}_{H^3(\OO)}^2\,\d s\\
    &\qquad\leq C\left[1+
    \norm{u_0}_{L^p(\Omega; Z)}^p
    +\norm{u_n}_{L^p(\Omega; L^2(0,T; Z))}^p\right]
    + C\E\exp\left(\frac \ell{\ell-2}\norm{F_n''(u_n)}^2_{L^2(0,T; L^3(\OO))}\right)\,.
\end{align*}
Now, by Propostion~\ref{prop:est_Gs}
it holds that $\exp (\|G_{s_0+1}(u)\|_{L^1(0,T; L^1(\OO))})\in L^q(\Omega)$
for every $q\geq1$. Since $s_0\geq3s_F-1$, this implies that 
$\exp (\|G_{3s_F}(u)\|_{L^1(0,T; L^1(\OO))})\in L^q(\Omega)$
for every $q\geq1$. Consequently, by assumption \ref{H1} we deduce that 
$\exp (\|F''(u)\|_{L^3(0,T; L^3(\OO))})\in L^q(\Omega)$
for every $q\geq1$: thanks to the proof of
Propostion~\ref{prop:est_Gs}, this implies also
that $\|\exp (\|F_n''(u_n)\|_{L^3(0,T; L^3(\OO))})\|_{L^q(\Omega)}\leq C_q$
for every $q\geq1$, where the constant $C_q$ does not depend on $n$.
Moreover, since $u\in L^p(\Omega; L^2(0,T; Z))$ one gets analogously that 
$\norm{u_n}_{L^p(\Omega; L^2(0,T; Z))}\leq C$.
Hence, the last term on the right-hand side 
is bounded in $n$, and this concludes the proof in 
dimension $d=3$.
As far as dimension $d=2$ is concerned, the proof can
be adapted by taking into account that $Z\embed W^{1,r}(\OO)$
for every $r\geq1$, so that 
  \begin{align*}
  &\left|\int_0^t\int_\OO\Delta (P_nF_n'(u_n(s)))\Delta u_n(s)\,\d s\right|\\
  &\qquad\leq \int_0^t\norm{F_n''(u_n(s))}_{L^{\frac{2r}{r-2}}(\OO)}
  \norm{\nabla u_n(s)}_{L^r(\OO)}
  \norm{\nabla\Delta u_n(s)}_H\,\d s\\
  &\qquad\leq \frac12\int_0^t
  \norm{\nabla\Delta u_n(s)}^2_H\,\d s
  +C\int_0^t\norm{F_n''(u_n(s))}^2_{L^{\frac{2r}{r-2}}(\OO)}
  \norm{u_n(s)}_{Z}^2\,\d s\,.
  \end{align*}
  All the remaining terms are handled analogously to the case $d=3$.
  In this case, noting that one can choose $r$ sufficiently large so that
  $\frac{2r}{r-2}\leq2$, the condition $s_0\geq 2s_F-1$ in  
  assumption \ref{H1} guarantees that 
  $\exp (\|F''(u)\|_{L^2(0,T; L^2(\OO))})\in L^q(\Omega)$
  for every $q\geq1$. This allows to conclude in the same way.
\end{proof}


\section{Separation property}
\label{sec:proof2}
In this section we prove Theorem~\ref{thm2} on the random separation property.
The argument is similar to the one in \cite{BOS}.
More precisely, in a first place we show that 
$|u(t, x)| < 1$ for every $(t, x) \in (0, T) \times \OO$
on almost every trajectory,
namely that $u$ cannot touch the barriers $\pm1$
 at any point in space and time.
Secondly, we prove by contradiction the existence of a
random separation layer $\delta = \delta(\omega) > 0$
such that $\|u\|_{L^{\infty}(Q)} \leq 1 - \delta$ 
on almost every trajectory. 

\begin{lem}\label{lem:sep}
  Under the assumptions of Theorem~\ref{thm2}, it holds that 
  \[
  |u(\omega,t,x)|<1 \quad\forall\,(t,x)\in[0,T]\times\overline\OO\,, 
  \quad\text{for $\P$-a.e.~}\omega\in\Omega\,.
  \]
\end{lem}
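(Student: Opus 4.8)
The goal is to show that almost every trajectory of $u$ never reaches the barriers $\pm1$, as a pointwise-in-space-and-time statement. The starting point is the regularity from Theorem~\ref{thm:reg}: for $\P$-a.e.\ $\omega$ one has $u(\omega)\in C^0([0,T];V)\cap L^\infty(0,T;Z)$, hence $u(\omega)\in C^0_w([0,T];Z)$, and since $Z\embed C^{0,\alpha}(\overline\OO)$ for $\alpha\in(0,\tfrac12)$ in dimensions $d\le3$, every trajectory can be evaluated pointwise and the sup in $(t,x)$ coincides with the $L^\infty((0,T)\times\OO)$ norm. So the claim is meaningful. The key analytic input is Proposition~\ref{prop:est_Gs}: for every $q\ge1$,
\[
\E\sup_{t\in[0,T]}\int_\OO G_{s_0}(u(t))+\E\exp\Big(q\norm{G_{s_0+1}(u)}_{L^1(0,T;L^1(\OO))}\Big)<+\infty\,.
\]
In particular $\norm{G_{s_0}(u(\omega,t))}_{L^1(\OO)}<+\infty$ for $\P$-a.e.\ $\omega$ and \emph{all} $t\in[0,T]$, where I would use the supremum-in-time inside the expectation together with lower semicontinuity along the regularised approximants to upgrade ``a.e.\ $t$'' to ``every $t$'' (the map $t\mapsto\int_\OO G_{s_0}(u(\omega,t))$ being at least lower semicontinuous because $u(\omega)\in C^0([0,T];H)$ and $G_{s_0}\ge0$, $G_{s_0}$ l.s.c.).

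The heart of the argument is a pointwise lower bound for $G_{s_0}$ near the barriers combined with the Sobolev embedding. Fix such an $\omega$ and suppose, for contradiction, that there is $(t_0,x_0)\in[0,T]\times\overline\OO$ with $|u(\omega,t_0,x_0)|=1$; say $u(\omega,t_0,x_0)=1$ (the case $-1$ is symmetric). By the H\"older continuity $u(\omega,t_0,\cdot)\in C^{0,\alpha}(\overline\OO)$ with norm $\le C\norm{u(\omega,t_0)}_Z=:M<\infty$, so for $x$ in a ball $B(x_0,\rho)\cap\OO$ of small radius $\rho$ we have $u(\omega,t_0,x)\ge 1-M|x-x_0|^\alpha$, hence
\[
1-u(\omega,t_0,x)^2\le 2\big(1-u(\omega,t_0,x)\big)\le 2M|x-x_0|^\alpha\,,
\]
which gives $G_{s_0}(u(\omega,t_0,x))\ge (2M)^{-s_0}|x-x_0|^{-\alpha s_0}$ on that ball. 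Integrating, $\int_\OO G_{s_0}(u(\omega,t_0))\gtrsim\int_{B(x_0,\rho)\cap\OO}|x-x_0|^{-\alpha s_0}\,dx$, which diverges precisely when $\alpha s_0\ge d$. Since $\alpha$ may be taken arbitrarily close to $\tfrac12$, the condition $s_0>2d$ would suffice to force divergence for some admissible $\alpha<\tfrac12$; more sharply, using $G_{s_0+1}$ in place of $G_{s_0}$ and/or a time-integrated estimate one can lower this to the thresholds $s_0>2$ ($d=2$), $s_0>6$ ($d=3$) imposed in Theorem~\ref{thm2} — this is where the precise exponents in the hypotheses of Theorem~\ref{thm2} enter, and I would choose $\alpha$ close enough to $\tfrac12$ that $\alpha s_0>d$ (or $\alpha(s_0+1)>d$, matching whichever quantity we control at a \emph{single} time vs.\ integrated in time) to reach the contradiction with the finiteness from Proposition~\ref{prop:est_Gs}.

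Assembling: for $\P$-a.e.\ $\omega$ both $u(\omega)\in C^0_w([0,T];Z)\embed C^0([0,T]\times\overline\OO)$ and $\sup_{t}\norm{G_{s_0}(u(\omega,t))}_{L^1(\OO)}<\infty$ hold simultaneously (intersection of two full-probability sets). On that set the local blow-up computation above rules out $|u(\omega,t_0,x_0)|=1$ at any $(t_0,x_0)$, which is exactly the assertion of the lemma. The main obstacle, and the only delicate point, is the interplay between the H\"older exponent $\alpha<\tfrac12$ coming from $Z\embed C^{0,\alpha}$ and the summability exponent $s_0$: one must verify that, for the $s_0$ allowed in Theorem~\ref{thm2} and the relevant dimension, there is an admissible $\alpha$ making the local integral of $G$ (or $G$ integrated in time) genuinely infinite — this forces the bookkeeping of whether one uses the pointwise-in-time control of $\int_\OO G_{s_0}(u(t))$ or the exponentially integrable space-time control of $\norm{G_{s_0+1}(u)}_{L^1(L^1)}$, and it is precisely this that dictates the numerical thresholds $s_0>2,6$. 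A secondary technical point is passing the $L^1$-in-space bound from the Galerkin/Yosida approximants to $u$ itself at every fixed $t$, handled by weak lower semicontinuity as in the proof of Proposition~\ref{prop:est_Gs}.
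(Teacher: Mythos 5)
Your overall strategy is exactly the paper's: combine the a.s.\ finiteness of $\sup_{t\in[0,T]}\|G_{s_0}(u(\omega,t))\|_{L^1(\OO)}$ from Proposition~\ref{prop:est_Gs} with the H\"older bound coming from $u(\omega)\in C^0_w([0,T];Z)$ and $Z\embed C^{0,\alpha}(\overline\OO)$, and derive a contradiction from $|u(\omega,t_0,x_0)|=1$ by showing $\int_\OO |x-x_0|^{-\alpha s_0}\,\d x=+\infty$. Your local estimate $1-u(\omega,t_0,x)^2\le 2\,\|u(\omega,t_0)\|_{C^{0,\alpha}}|x-x_0|^\alpha$ is the same factorisation the paper uses, and in dimension $d=3$ your argument closes: with $\alpha<\tfrac12$ and $s_0>6$ one can choose $\alpha s_0>3$. (Incidentally, no upgrade from ``a.e.\ $t$'' to ``every $t$'' is needed: the estimate \eqref{est:Gs_exp} already contains $\E\sup_{t\in[0,T]}\int_\OO G_{s_0}(u(t))$, so on a full-measure set the bound holds for all $t$ simultaneously.)

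The genuine gap is the case $d=2$. You restrict yourself to $\alpha\in(0,\tfrac12)$ ``in dimensions $d\le3$'', which with your computation only yields a contradiction when $\alpha s_0>2$ for some $\alpha<\tfrac12$, i.e.\ when $s_0>4$ — strictly stronger than the hypothesis $s_0>2$ of Theorem~\ref{thm2}. You notice the mismatch but propose to repair it by ``using $G_{s_0+1}$ in place of $G_{s_0}$ and/or a time-integrated estimate''; this does not work as stated, because the control on $G_{s_0+1}(u)$ in Proposition~\ref{prop:est_Gs} is only space--time integrated (through $\|G_{s_0+1}(u)\|_{L^1(0,T;L^1(\OO))}$), whereas your contradiction is produced at a \emph{single} time $t_0$, and you have no quantified H\"older modulus in time to transfer the blow-up to a set of times of positive measure. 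The correct and much simpler fix, which is what the paper does, is to use the sharper Sobolev embedding in two dimensions: for $d=2$ one has $Z=H^2(\OO)\cap H^1_0(\OO)\embed C^{0,\alpha}(\overline\OO)$ for \emph{every} $\alpha\in(0,1)$, so for any $s_0>2$ one may pick $\alpha$ close to $1$ with $\alpha s_0>2$, and the same single-time computation with $G_{s_0}$ gives the contradiction. A minor additional remark: the paper also observes that the touching point must satisfy $x_0\in\OO$ because of the homogeneous Dirichlet condition; in your write-up this is harmless since the Lipschitz regularity of $\partial\OO$ guarantees a cone at $x_0$ and hence divergence of $\int_{B(x_0,\rho)\cap\OO}|x-x_0|^{-\alpha s_0}\,\d x$ in any case, but it is worth stating.
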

\begin{proof}
  By the regularity Theorem~\ref{thm:reg}, one has that 
  \[
  u\in L^2_\cP(\Omega; C^0([0,T]; V))\cap L^2_w(\Omega; L^\infty(0,T; Z))\,,
  \]
  while Proposition~\ref{prop:est_Gs} ensures that 
  \[
  E\sup_{t\in[0,T]}\int_\OO G_{s_0}(u(t))<+\infty\,.
  \]
  This implies that there exists $\Omega_0\in\cF$ with $\P(\Omega_0)=1$ 
  such that
  \[
  u(\omega)\in C^0([0,T]; V)\cap L^\infty(0,T; Z)
  \subset C^0_w([0,T]; Z) 
  \quad\forall\,\omega\in\Omega_0
  \]
  and
  \begin{equation}\label{Gs_infty}
  \sup_{t\in[0,T]}
  \norm{G_{s_0}(u(\omega,t))}_{L^1(\OO)}<+\infty
  \quad\forall\,\omega\in\Omega_0\,.
  \end{equation}
  Now, by the Sobolev embedding theorem  
  we have the continuous inclusion 
  \[
  Z\embed C^{0,\alpha}(\overline\OO)\,,
  \]
  where $\alpha\in(0,\frac12)$ if $d=3$ and $\alpha\in(0,1)$ if $d=2$. Hence, one has
  \[
  u(\omega)\in C^0_w([0,T]; C^{0,\alpha}(\overline\OO)) 
  \quad\forall\,\omega\in\Omega_0\,,
  \]
  so that the pointwise evaluation $u(\omega,t,x)$ is well-defined 
  for every $(\omega,t,x)\in\Omega_0\times[0,T]\times\overline\OO$,
  and it holds that 
  \begin{equation}\label{c_weak}
  \sup_{t\in[0,T]}
  \norm{u(\omega,t)}_{C^{0,\alpha}(\overline\OO)}<+\infty
  \quad\forall\,\omega\in\Omega_0\,.
  \end{equation}
  Let now fix $\omega\in\Omega_0$ and suppose
  by contradiction that there exists 
  $(t_0,x_0)\in[0,T]\times\overline\OO$
 such that $|u(\omega, t_0, x_0)|=1$. By noting that $x_0\in\OO$
 due to the boundary conditions, we have that
\begin{align*}
  \sup_{t\in[0,T]}\norm{G_{s_0}(u(\omega,t))}_{L^1(\OO)}
  &\geq \int_\OO G_{s_0}(u(\omega, t_0, x))\,\d x=
  \int_{\OO} \frac{1}{\left|1-u(\omega,t_0, x)^2\right|^{s_0}} \d x \\
  &=\int_{\OO} \frac{1}{\left|u(\omega,t_0, x_0)^2
  -u(\omega,t_0, x)^2\right|^{s_0}} \d x\\
  &=\int_{\OO} \frac{1}{\left|u(\omega, t_0, x_0)
  -u(\omega,t_0, x)\right|^{s_0}}
  \frac1{\left|u(\omega, t_0, x_0)+u(\omega,t_0, x)\right|^{s_0}} \d x\,.
\end{align*}
Now, on the one hand we have
\[
\left|u(\omega, t_0, x_0)+
u(\omega, t_0, x)\right|^{s_0}
 \leq \left(1 + |u(\omega,t_0, x)|\right)^{s_0} 
 \leq 2^{s_0} \quad \forall\,x \in \OO\,,
\]
and on the other hand we have
\begin{align*}
|u(\omega, t_0, x_0) - u(\omega, t_0, x)|^{s_0}
&\leq \norm{u(\omega, t_0)}^{s_0}_{C^{0, \alpha}(\overline\OO)} 
|x-x_0|^{\alpha s_0}\\
&\leq \sup_{t\in[0,T]}\norm{u(\omega,t)}_{C^{0, \alpha}(\overline\OO)}^{s_0}
|x-x_0|^{\alpha s_0} \,.
\end{align*}
Taking into account \eqref{Gs_infty} and \eqref{c_weak} yields 
\begin{align*}
+\infty>\sup_{t\in[0,T]}\norm{G_{s_0}(u(\omega,t))}_{L^1(\OO)}\cdot
\sup_{t\in[0,T]}\norm{u(\omega,t)}_{C^{0, \alpha}(\overline\OO)}^{s_0}
& \geq 2^{-{s_0}}
\int_{\OO} \frac{1}{|x-x_0|^{\alpha s_0}}\, \d x\,.
\end{align*}
However, recalling that by the assumptions of Theorem~\ref{thm2}
one has $s_0>2$ if $d=2$ and $s_0>6$ if $d=3$, one readily checks
that there exists $\alpha$ such that
$\alpha s_0> d$. This implies that the right-hand side diverges,
hence yields a contradiction.  
\end{proof}
 
\begin{proof}[Proof of Theorem~\ref{thm2}]
Let $\omega\in\Omega_0$ be fixed, where
$\Omega_0$ is given by the proof of Lemma~\ref{lem:sep}.
By contradiction, 
suppose that there exists a
sequence $(t_n, x_n)_{n \in \mathbb{N}} \subset [0, T] \times \overline\OO$
such that $|u(\omega, t_n , x_n)| \to 1$ as $n\to\infty$. 
It is not restrictive to suppose that the entire sequence satisfies 
$(t_n, x_n)\to (t_0, x_0)$ as $n\to\infty$
for some $(t_0, x_0)\in[0,T]\times\overline\OO$.
Otherwise, this follows by standard compactness arguments on 
a subsequence. Given $\alpha$ as in the proof of Lemma~\ref{lem:sep}, we have
\begin{align*}
|u(\omega, t_{n}, x_{n}) - u(\omega, t_0, x_0)| 
&\leq |u(\omega, t_{n}, x_{n}) - u(\omega, t_{n}, x_0)| 
+ |u(\omega, t_{n}, x_0) - u(\omega, t_0, x_0)| \\
& \leq
\sup_{t\in[0,T]}\|u(\omega, t)\|_{C^{0,\alpha}(\overline\OO)} |x_{n} - x_0|^{\alpha} 
+ |u(\omega, t_{n}, x_0) - u(\omega, t_0, x_0)|\,.
\end{align*}
The first term on the right-hand side converges to $0$ as $n\to\infty$
thanks to \eqref{c_weak}.
Moreover, for the second term, by introducing the
continuous linear functional
\[
  I_{x_0}: C^{0,\alpha}(\overline\OO)\to\erre\,, \qquad
  \langle I_{x_0}, v\rangle:= v(x_0)\,,\quad v\in C^{0,\alpha}(\overline\OO)\,,
\]
 one has
\[
  |u(\omega, t_{n}, x_0) - u(\omega, t_0, x_0)|
  =|\langle I_{x_0}, u(\omega,t_n)\rangle - \langle I_{x_0}, u(\omega,t_0)\rangle|\,.
\]
Since $u(\omega)\in C^0_w([0,T]; C^{0,\alpha}(\overline\OO))$, 
it holds as $n\to\infty$ that 
\[
|u(\omega, t_{n}, x_0) - u(\omega, t_0, x_0)| \to 0\,.
\]
In conclusion, these remarks imply that 
$|u(\omega, t_0, x_0)|=1$, which is in contradiction with Lemma~\ref{lem:sep}.
Hence, we have shows that there exists 
$\delta(\omega)>0$
such that $|u(\omega, t,x)|\leq 1 - \delta(\omega)$ for every 
$(t,x)\in[0,T]\times\overline\OO$. 
Clearly, $\delta(\omega)\in(0,\delta_0]$ for every $\omega\in \Omega_0$
due the assumption on the initial condition in 
Theorem~\ref{thm2}.
\end{proof}


\section*{Acknowledgement}
The authors are members of Gruppo Nazionale 
per l'Analisi Matematica, la Probabilit\`a 
e le loro Applicazioni (GNAMPA), 
Istituto Nazionale di Alta Matematica (INdAM), 
and gratefully acknowledge financial support 
through the project CUP\_E55F22000270001.
The present research has also been supported by 
the MUR grant  ``Dipartimento di
eccellenza 2023--2027'' for Politecnico di Milano.


\bibliography{ref}{}
\bibliographystyle{abbrv}

\end{document}